\documentclass[12pt]{amsart}
\pdfoutput=1
\pdfadjustspacing=1

\usepackage{amssymb,latexsym,vmargin,graphicx,color}


\newtheorem{theorem}{Theorem}

\theoremstyle{definition}


\newcommand{\beql}[1]{\begin{equation}\label{#1}}
\newcommand{\eeq}{\end{equation}}
\newcommand{\comment}[1]{}

\newcommand{\Abs}[1]{{\left|{#1}\right|}}
\newcommand{\Lone}[1]{{\left\|{#1}\right\|_{1}}}
\newcommand{\Linf}[1]{{\left\|{#1}\right\|_\infty}}
\newcommand{\Norm}[1]{{\left\|{#1}\right\|}}
\newcommand{\Mean}{{\bf E}}
\newcommand{\Floor}[1]{{\left\lfloor{#1}\right\rfloor}}

\newcommand{\Prob}[1]{{{\bf{Pr}}\left[{#1}\right]}}
\newcommand{\Set}[1]{{\left\{{#1}\right\}}}

\newcounter{rem}
\setcounter{rem}{0}

\newcounter{othm}
\setcounter{othm}{0}
\def\theothm{\Alph{othm}} 
\newenvironment{othm}{
  \em
  \vskip 0.10in
  \refstepcounter{othm}
  \noindent{\bf Theorem\ \theothm}
}{\vskip 0.10in}


\begin{document}

\title[Squares of Newman polynomials]{Coefficients of squares of Newman polynomials}

\author[M. Kolountzakis]{Mihail N. Kolountzakis}
\address{Department of Mathematics, University of Crete, Knossos Ave., GR-714 09, Iraklio, Greece}
\email{kolount@gmail.com}

\date{June 2008; revised November 2008}

\thanks{
Supported by research grant No 2569 from the Univ.\ of Crete.
}

\begin{abstract}
We show that there are polynomials $p_N$ of arbitrarily large degree $N$, with coefficients equal to 0 or 1 (Newman polynomials),
such that
$$
\liminf_{N \to \infty} N \Linf{p_N^2} \bigl / p_N^2(1) < 1,
$$
where $\Linf{q}$ denotes the maximum
coefficient of the polynomial $q$ and which, at the same time, are sparse:
$p_N(1)/N \to 0$.
This disproves a conjecture of Yu \cite{yu}. We build on some previous results of Berenhaut and Saidak
\cite{berenhaut-saidak} and Dubickas \cite{dubickas}
whose examples lacked the sparsity. This sparsity we create from these examples by randomization.
\end{abstract}

\hrulefill
\begin{quotation}
\begin{center}
 {\bf Acknowledgement of priority}
\end{center}
\noindent
Results stronger than those contained in this paper, with similar methods, have been obtained by Javier Cilleruelo \cite{cilleruelo} before my paper was written. My paper will not be published. Please do not cite it. 

\ 

Mihalis Kolountzakis
\end{quotation}
\hrulefill

\ 

\maketitle

A {\em Newman polynomial} is a polynomial whose coefficients are 0 or 1. This is a very natural object
and this terminology is naturally not universal across mathematics. Many problems can be expressed using Newman
polynomials and many of them turn out to be quite hard: it is often non-trivial to encode this 0-1 condition
using the data of a specific problem. For instance (drawing from the author's experience) many questions that concern
problems of tiling the integers by translations of finite sets can be expressed using divisibility and factorization
properties of Newman polynomials (see for instance \cite{kolountzakis}).
Similarly such properties of Newman polynomials play a major role
in questions of phase retrieval \cite{lemke-et-al} (how to recover the phase of the Fourier transform of an indicator function of
a finite set of integers if one knows only the modulus of the Fourier transform).
Several extremal problems concerning Newman polynomials are also of interest (see, for instance, the references
in \cite{berenhaut-saidak}).
 
In this note we disprove a conjecture of Yu \cite{yu} which concerns the size of the coefficients
of squares of Newman polynomials.
For a polynomial $p(x)=\sum_{j=0}^d p_j x^j$, with $p_d \neq 0$,
we denote by $\Linf{p}$ the size of the maximal coefficient
in absolute value and by $\Lone{p} = \sum_{j=0}^d \Abs{p_j}$. We also write $\deg{p} = d$.

Write
$$
R(p) = \frac{\Linf{p^2}}{\Norm{p}_1^2}.
$$
For any polynomial $p$ with nonnegative coefficients
we have $\Norm{p}_1^2 = \Norm{p^2}_1$ and, observing that the degree of $p^2$ is $2 \deg{p}$, we obtain easily
\beql{trivial}
R(p) \ge \frac{1}{2\deg{p}+1}.
\eeq
Yu \cite{yu} conjectured that if $p_n$ is a sequence of Newman polynomials
with
\beql{sparsity}
\Norm{p_n}_1 = o(\deg{p_n})
\eeq
then
\beql{yu-conjecture}
\liminf_n R(p_n) \deg{p_n} \ge 1.
\eeq
By \eqref{trivial} the trivial right hand side in \eqref{yu-conjecture} would be 1/2.

That \eqref{yu-conjecture} fails if we ommit the sparsity condition \eqref{sparsity} was shown by
Berenhaut and Saidak \cite{berenhaut-saidak} and by Dubickas \cite{dubickas}.
They exhibited sequences of polynomials
$p_n$ with $\liminf_n R(p_n) \deg{p_n} < 1$ (the liminf was 8/9 in the case of \cite{berenhaut-saidak}
and 5/6 in the case of \cite{dubickas}),
but with $\Norm{p_n}_1 \ge c \deg{p_n}$ for some positive constant $c$.

Our purpose here is to show that the conjecture of Yu mentioned
above fails. We will show that \eqref{yu-conjecture} fails even with the condition \eqref{sparsity}.
For this we will use a sequence of ``dense'' polynomials $p_n$ which satisfies $\liminf_n R(p_n) \deg{p_n} = \rho < 1$
(such as any of those constructed in
\cite{berenhaut-saidak} or \cite{dubickas}) and will construct, for any $\rho < \rho' < 1$, another sequence
of Newman polynomials $q_n$, with $\Norm{q_n}_1 = o(\deg{q_n})$,
which satisfies $\liminf_n R(q_n) \deg{q_n} = \rho' < 1$.

The remainder of this note is devoted to the proof of the following result.
\begin{theorem}
Suppose there exists a sequence of Newman polynomials $p_n$, with degrees tending to infinity,
and positive constants $c_0$, $\rho$ such that
\beql{old}
\Lone{p_n} \ge c_0 \deg{p_n} \mbox{\ \ and\ \ } R(p_n) \le \rho \frac{1}{\deg{p_n}}.
\eeq
Then for every $\rho'>\rho$ there exists an infinite sequence of Newman polynomials $q_n$, with degrees tending to infinity,
such that
\beql{new}
\Lone{q_n} = o(\deg{q_n}) \mbox{\ \ and\ \ } R(q_n) \le \rho' \frac{1}{\deg{q_n}}.
\eeq
\end{theorem}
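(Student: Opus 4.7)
The plan is to produce each $q_n$ from $p_n$ by random thinning of the coefficients. Write $p_n(x) = \sum_{i \in I_n} x^i$ with $I_n \subseteq \Set{0,1,\ldots,d_n}$ and $d_n = \deg p_n$, and set
\[
q_n(x) = x^{d_n} + \sum_{i \in I_n,\, i < d_n} X_i\, x^i,
\]
where the $X_i$ are independent Bernoulli$(\alpha_n)$ random variables and $\alpha_n \to 0$ is a parameter to be tuned; the fixed leading term ensures $\deg q_n = d_n$. I will choose $\alpha_n = C (\log d_n / d_n)^{1/4}$ for a large constant $C$. Sparsity then comes for free: a Chernoff bound applied to $\Lone{q_n} - 1 \sim \mathrm{Bin}(\Lone{p_n} - 1, \alpha_n)$, together with $\alpha_n \Lone{p_n} \ge \alpha_n c_0 d_n \to \infty$, gives $\Lone{q_n} = (1+o(1))\, \alpha_n \Lone{p_n}$ with high probability, hence $\Lone{q_n}/\deg q_n \le 2\alpha_n \to 0$.

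The central step is to bound $\Linf{q_n^2}$ from above by approximately $\alpha_n^2 \Linf{p_n^2}$. Fix $\epsilon > 0$ small. For each index $N$,
\[
[x^N] q_n^2 = 2 \sum_{(i,j)\in P_N} X_i X_j + \mathrm{diag},
\]
with $P_N = \Set{(i,j):\ i<j,\ i+j=N,\ i,j\in I_n}$ and $\mathrm{diag}\in\Set{0,1}$ accounting for a possible $i=j=N/2$ term. The key combinatorial point is that the pairs in $P_N$ are pairwise vertex-disjoint: any two $(i,N-i)$ and $(i',N-i')$ with $i, i' < N/2$ satisfy $i+i' < N$. Hence the products $X_i X_{N-i}$ indexed by $P_N$ are jointly independent Bernoulli$(\alpha_n^2)$'s, and writing $k_N = [x^N] p_n^2$, a Chernoff inequality gives
\[
\Prob{[x^N] q_n^2 \ge (1+\epsilon)\,\alpha_n^2 k_N + 1} \le \exp(-c_\epsilon\, \alpha_n^2 k_N).
\]

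To make this uniform in $N$ I split on a threshold $T_n = \epsilon\,\alpha_n^2 \Linf{p_n^2}$: positions with $k_N \le T_n$ already satisfy $[x^N] q_n^2 \le k_N \le T_n$ deterministically, below the target, while positions with $k_N > T_n$ have Chernoff failure probability at most $\exp(-\Omega(\alpha_n^4 \Linf{p_n^2}))$. A union bound over the $\le 2d_n+1$ values of $N$ succeeds once $\alpha_n^4 \Linf{p_n^2} \gg \log d_n$; using $\Linf{p_n^2}\ge \Lone{p_n}^2/(2d_n+1)\gtrsim c_0^2 d_n$, this reduces to $\alpha_n^4 d_n \gg \log d_n$, which the chosen $\alpha_n$ delivers. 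Combining the two concentrated estimates, with high probability
\[
R(q_n)\,\deg q_n \le \frac{(1+\epsilon)\,\alpha_n^2 \Linf{p_n^2}\cdot d_n}{(1-o(1))^2\,\alpha_n^2 \Lone{p_n}^2} = (1+O(\epsilon))\,R(p_n)\,\deg p_n \le (1+O(\epsilon))\,\rho,
\]
which lies below $\rho'$ for $\epsilon$ small enough. Since the failure probability tends to zero, a deterministic $q_n$ with all the required properties exists for every large $n$.

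The main obstacle I anticipate is the uniform concentration of $\Linf{q_n^2}$ across the $\Theta(d_n)$ coefficient positions: sparsity demands $\alpha_n \to 0$, yet $\alpha_n^4 d_n$ must simultaneously outgrow $\log d_n$ so that the per-$N$ Chernoff tail beats the union bound. These constraints single out the window $(\log d_n/d_n)^{1/4}\ll \alpha_n \ll 1$, and it is essential that positions with small $k_N$ are handled by the trivial deterministic bound $[x^N] q_n^2 \le k_N$ rather than by concentration.
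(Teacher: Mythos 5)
Your proposal is correct and follows essentially the same route as the paper: random Bernoulli thinning, independence of the products over the disjoint pairs $\{i,N-i\}$, Chernoff bounds with a union bound over the $O(d_n)$ coefficient positions, and a deterministic bound for positions where the mean is too small for concentration (your threshold $T_n$ plays the same role as the paper's case split on whether $\Mean{X_{n,k,j}}\le N^{1/10}$). The only cosmetic differences are your sharper choice of $\alpha_n$ and your fixing of the leading monomial to preserve the degree, where the paper instead bounds the probability of the event $\deg q_n \le \tfrac{c_0}{2}\deg p_n$.
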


\begin{proof}
The idea of the proof is to construct the polynomials $q_n$ from the $p_n$ by keeping a random
subset of the monomials in $p_n$. This will achieve the sparsity condition \eqref{sparsity} if we keep the monomials
with small probability. At the same time we are able to control the size of the coefficients of $q_n^2$
using standard tail estimates.
 
Write $N = \deg{p_n}$, assume $N$ is large, and notice 
that our assumptions on $p_n$ and \eqref{trivial} imply that
\beql{height-lb}
\Linf{p_n^2} \ge \frac{c_0^2 N^2}{2N+1} \ge \frac{c_0^2}{3}N.
\eeq
Let $\alpha = \alpha(N) = N^{-1/10}$
and define the random polynomial
$$
q_n(x) = \sum_{j=0}^N q_j x^j
$$
by taking $q_j = \epsilon_j p_j$ with independent $\epsilon_j \in \Set{0,1}$ being equal to 1
with probability $\alpha$ and $0$ with probability $1-\alpha$. Write $(p_n^2)_j$ and $(q_n^2)_j$ for the coefficients of $x^j$ respectively in
the polynomials $p_n^2$ and $q_n^2$.

It follows that $q_n$ is a Newman polynomial of degree at most $N$ and we have
$$
\Norm{q_n}_1 = \sum_{j=0}^N \epsilon_j (p_n)_j,\ \ (q_n^2)_k = \sum_{j=0}^k \epsilon_j \epsilon_{k-j} (p_n)_j (p_n)_{k-j}\ \ (k=0,1,\ldots,2N).
$$
It follows immediately that
$$
\Mean{\Norm{q_n}_1} = \alpha \Norm{p_n}_1
$$
and, if $k$ is odd, we also
have
$$
\Mean{(q_n^2)_k} = \alpha^2 (p_n^2)_k
$$
(the reason for restricting $k$ to be odd is that then the products
$\epsilon_j\epsilon_{k-j}$ that appear are products of independent variables). 

If $k$ is even we have
\begin{eqnarray*}
\Mean{(q_n^2)_k} &=& \alpha^2 (p_n^2)_k + \alpha(1-\alpha)(p_n)_{k/2}^2\\
 &=& \alpha^2 (p_n^2)_k + \theta_k,\ \ \ (0 \le \theta_k < 1).
\end{eqnarray*}
The random variables $\Norm{q_n}_1$ and $(q_n^2)_k$ are both sums of indicator (0-1 valued) random variables.
In the case of $\Norm{q_n}_1$ these random variables are independent while $(q_n^2)_k$ can be
written, depending on whether $k$ is odd or even, as follows.

When $k$ is odd we have
\begin{eqnarray}
(q_n^2)_k &=& \sum_{j=0}^{\Floor{k/2}} \epsilon_j \epsilon_{k-j} (p_n)_j (p_n)_{k-j} +
            \sum_{j=\Floor{k/2}+1}^k \epsilon_j \epsilon_{k-j} (p_n)_j (p_n)_{k-j} \nonumber\\
  &=:& X_{n,k,1} + X_{n,k,2}, \label{odd-split}
\end{eqnarray}
while for even $k$ we have
\begin{eqnarray}
(q_n^2)_k &=& \sum_{j=0}^{k/2-1} \epsilon_j \epsilon_{k-j} (p_n)_j (p_n)_{k-j} +
            \sum_{j=k/2+1}^k \epsilon_j \epsilon_{k-j} (p_n)_j (p_n)_{k-j} + \epsilon_{k/2} (p_n)_{k/2}^2 \nonumber\\
 &=:& Y_{n,k,1} + Y_{n,k,2} + Y_{n,k,3}. \label{even-split}
\end{eqnarray}
The random variables $X_{n,k,1}, X_{n,k,2}, Y_{n,k,1}, Y_{n,k,2}, Y_{n,k,3}$ defined above
are all sums of {\em independent} indicator random variables.
For such random variables we can control the probability of their deviation from their mean
using the following well known result, which we are going to use with $\epsilon$ being a constant
that depends only on $\rho$ and $\rho'$.
\begin{othm}\label{th:chernoff}
{\rm (Chernoff \cite{chernoff}, \cite[Corollary A.1.14]{alon-spencer})}
If $X=X_1+\cdots+X_k$, and the $X_j$ are independent indicator random
variables {\rm (}that is $X_j \in \{0,1\}${\rm )}, then for all $\epsilon>0$
$$
\Prob{\Abs{X-\Mean{X}}>\epsilon \Mean{X}} \le 2 e^{-c_\epsilon \Mean{X}},
$$
where $c_\epsilon>0$ is a function of $\epsilon$ alone
$$
c_\epsilon = \min{\{-\log{(e^\epsilon(1+\epsilon)^{-(1+\epsilon)})},
                        \epsilon^2/2\}}.
$$
\end{othm}
Fixing $\epsilon>0$, our purpose is to avoid the following ``bad events'':
\begin{eqnarray}
E &=& \Set{ \Norm{q_n}_1 < (1-\epsilon) \Mean{\Norm{q_n}_1} } \nonumber\\
 &=& \Set{ \Norm{q_n}_1 < (1-\epsilon) \alpha \Norm{p_n}_1 }, \label{bad-1}
\end{eqnarray}
\beql{bad-2}
E_k = \Set{ (q_n^2)_k > (1+\epsilon) \alpha^2 \Linf{p_n^2} }, \ \ k=0,\ldots,2N,
\eeq
and
\beql{bad-3}
D = \Set{\deg q_n \le \frac{c_0}{2} \deg p_n}.
\eeq
If none of these events holds then
\begin{eqnarray*}
R(q_n) \deg{q_n}  &=& \frac{\Linf{q_n^2}}{\Norm{q_n}_1^2} \deg{q_n}\\
&\le& \frac{1+\epsilon}{(1-\epsilon)^2} \liminf_n R(p_n) \deg{p_n}\\
  &\le& \frac{1+\epsilon}{(1-\epsilon)^2} \rho,
\end{eqnarray*}
which can be made less than $\rho'$ for appropriately small $\epsilon$.
The failure of $E$ and $D$ guarantees the sparseness of $q_n$ since
$$
\Norm{q_n}_1 \le (1-\epsilon) N^{9/10}\ \mbox{ and }\ \deg q_n > \frac{c_0}{2} N.
$$
Therefore it remains to estimate from above the probability that none of the bad events $E$, $E_k$ holds.

Let us start with $\Norm{q_n}_1$. This is a sum of independent indicator random variables with mean
$\Mean{\Norm{q_n}_1} = \alpha \Norm{p_n}_1 \ge c_0 \alpha N = c_0 N^{9/10}$, hence Theorem \ref{th:chernoff} implies
\begin{eqnarray}
\Prob{E} &\le& \Prob{\Abs{\Norm{q_n}_1 - \alpha \Norm{p_n}_1} > \epsilon \alpha \Norm{p_n}_1} \nonumber\\
 &\le& 2 \exp(-c_\epsilon c_0 N^{0.9})\label{estimate-1},
\end{eqnarray}
which tends to 0 with $N \to \infty$. One proves similarly that $\Prob{D} \to 0$.

The summands contributing to the random variables $(q_n^2)_k$ are indicator random variables but there are dependencies.
That's why we need to use the breakups \eqref{odd-split} and \eqref{even-split} above.
The $X$ and $Y$ random variables defined there are sums of independent indicator random variables and we can apply Theorem \ref{th:chernoff}
to them in order to control the probabilities of their deviations from their mean.

Let us deal with the case of odd $k$. The case of even $k$ is treated similarly. We separate the random variables
$X_{n,k,1}, X_{n,k,2}$, $k=0,2,\ldots,2N$, into two groups. In the first group we put those variables whose mean is at most $N^{1/10}$
and in the second group we put the remaining variables. For a given odd $k$ we have the following three cases: (a) both $X_{n,k,1}$
and $X_{n,k,2}$ are in the first group, (b) only one of them is, and (c) none is.
Notice that the mean of an $X$ variable is equal to $\alpha^2=N^{-2/10}$
times the maximum value that variable can take, which corresponds to the case of all relevant $\epsilon_j$ being
equal to 1.

If (a) is the case then the $X$ variables are always at most $N^{3/10}$.
From \eqref{height-lb} it follows that $E_k$ cannot hold.

If (b) is the case then, assuming, without loss of generality,
that the variable $X_{n,k,1}$ is in the first group, we get that always $X_{n,k,1} \le N^{3/10}$ as before.
Therefore, for $E_k$ to hold it must be the case that
$$
X_{n,k,2} \ge (1+\frac{\epsilon}{2})\alpha^2 \Linf{p_n^2} \ge (1+\frac{\epsilon}{2}) \alpha^2 (p_n^2)_k \ge (1+\frac{\epsilon}{2}) \Mean{X_{n,k,2}},
$$
and from Theorem \ref{th:chernoff} we obtain
$$
\Prob{E_k} \le \Prob{X_{n,k,2} > (1+\frac{\epsilon}{2}) \Mean{X_{n,k,2}}} \le 2 \exp(-c_{\epsilon/2} N^{1/10}).
$$
Since the number of relevant $k$ is $O(N)$ this implies that the total probability of the $E_k$ for odd $k$ falling in case (b) tends to 0.

The case (c) is treated similarly to case (b).

We have proved that $\Prob{\cup_k E_k} \to 0$ as $n \to \infty$ by splitting the events into three groups and the proof of the Theorem is complete as this implies that there is a choice of the numbers $\epsilon_j \in \Set{0,1}$ such that the events $E$, $E_k$ do not hold and this implies that the polynomial $q_n$ has the desired properties.
\end{proof}


\end{document}